\newtheorem{thm}{Theorem}[section]
\newtheorem{prop}[thm]{Proposition} 
\newtheorem{exa}{Example} 
\newtheorem{conj}[thm]{Conjecture}
\newtheorem{defn}[thm]{Definition} 
\theoremstyle{definition} 
\newtheorem{rem}[thm]{Remark} 
\theoremstyle{remark}
\def\<{\langle}
\def\>{\rangle}
\patchcmd{\abstract}{\scshape\abstractname}{\textbf{\abstractname}}{}{}
\def\@makefnmark{} 
\numberwithin{equation}{section}
\numberwithin{exa}{section}
\begin{document}

\title{Lagrangian surfaces with Legendrian boundary}

\author{Mingyan Li, Guofang Wang and Liangjun Weng} 
\address{\textit{School of Mathematics and Statistics, Zhengzhou University, Zhengzhou, 450001, P. R. China }}
\address{\textit{Mathematisches Institut, Albert-Ludwigs-Universit\"{a}t Freiburg, Freiburg im Breisgau, 79104, Germany}}
\email{mingyan.li@mathematik.uni-freiburg.de}

\address{\textit{Mathematisches Institut, Albert-Ludwigs-Universit\"{a}t Freiburg, Freiburg im Breisgau, 79104, Germany}}
\email{guofang.wang@math.uni-freiburg.de}

\address{\textit{School of Mathematical Sciences, University of Science and Technology of China, Hefei, 230026, P. R. China}}
\address{\textit{Mathematisches Institut, Albert-Ludwigs-Universit\"{a}t Freiburg, Freiburg im Breisgau, 79104, Germany}}

 \email{liangjun.weng@math.uni-freiburg.de}
 
\date{}
\thanks{This project is partly supported by SPP 2026 of DFG ``Geometry at infinity''}
\maketitle

\begin{abstract}In this note, we first introduce a boundary problem for Lagrangian submanifolds, analogous to the problem for free boundary hypersurfaces and capillary hypersurfaces. Then we present several interesting examples of  Lagrangian submanifolds  
	satisfying this boundary condition  and we prove a Lagrangian version of Nitsche (or Hopf) type theorem. Some problems are proposed at the end of this note.

\end{abstract}

\section{Introduction}

A hypersurface  in $\mathbb{R}^{n+1}$ with boundary lying on a (support) hypersurface $S$ is called a free boundary hypersurface  (capillary hypersurface respectively) if it intersects  the support hypersurface $S$ orthogonally
(at a constant angle $\theta\in (0,\pi)$ respectively). Such hypersurfaces play an important role in differential geometry.  A classical result of Nitsche \cite{Nit1985} states that a disk type free boundary minimal surface or capillary minimal suface 
is a totally geodesic disk. Such kind classification results hold also for constant mean curvature (CMC) surfaces.
For related results see the papers of Ros-Vergasta \cite{RV} and  Ros-Souam \cite{RosSou} for capillary minimal surfaces.

Lagrangian submanifolds play an important role in the study of closed symplectic manifolds. 
Symplectic manifolds with a convex boundary were introduced explicitly by Eliashberg-Gromov in \cite{EG}. 
In  a symplectic manifold with convex boundary, it would be interesting to consider Lagrangian submanifolds with boundary on the  convex boundary of the symplectic manifold.
The main objective of this short note is to introduce a suitable boundary problem for such Lagrangian submanifolds, similar to the free boundary problem  and the capillary boundary problem for hypersurfaces.

Let $\mathbb{C}^n=\mathbb{R}^{2n}$ be the Euclidean space equipped with standard symplectic form $\omega_0$ and  the standard complex structure $J_0$. We consider $\iota: \Sigma \to \mathbb{C}^n$ as
a Lagrangian submanifold 
with boundary $\partial \Sigma$ on the unit round sphere $\mathbb{S}^{2n-1}$. The unit ball $\mathbb{B}^{2n}$ is a convex symplectic manifold.  See \cite{EG} or Section 2 below. On  its boundary $\mathbb{S}^{2n-1}$ there is a naturally induced standard contact structure, in fact, a Sasakian structure.
We require that
the boundary $\partial \Sigma$ as a submanifold on $\mathbb{S}^{2n-1}$ is Legendrian. For the precise definition see Section 2 below. We observe that in this case the unit normal vector $\mu$ at $x\in \partial \Sigma \subset \Sigma$ lies in a plane spanned by
$x$ and $Jx$, i.e., there exists a $\theta\in [0,\pi)$ such that
$$ \mu =\sin \theta \, x + \cos \theta \, Jx.$$
We call the angle $\theta$ {\it a contact angle} and call $\Sigma$  a Lagrangian submanifold with Legendrian capillary boundary, or simply capillary Lagrangian submanfold,
if the contact angle is constant. When $\theta =\frac \pi 2$, we call $\Sigma$ a free boundary  Lagrangian submanifold, or a Lagrangian submanifold with Legendrian free boundary.

In this paper we are interested in the case $n=2$. It is trivial to see that an equatorial plane disk is a minimal Lagrangian surface with free boundary in $\mathbb{B}^4\subset\mathbb{R}^4$.
We show that the equatorial plane disks are the only such examples, even in the class of all Legendrian capillary type boundary.

\begin{thm}\label{thm in Space forms}
	\quad	Given $D:=\{(x_1,x_2): x_1^2+x_2^2\leq  1\}$.
	Let $\iota:D^2\to \mathbb{B}^4\subset \mathbb{R}^4$ be a (branched) minimal Lagrangian surface with Legendrian capillary boundary on $\mathbb{S}^3$. Then
	$\iota(D)$ is an equatorial plane disk.
\end{thm}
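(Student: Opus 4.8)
The plan is to reduce the rigidity to a vanishing statement for a holomorphic cubic differential, in the spirit of Hopf's and Nitsche's arguments. First I would fix a conformal (branched) parametrization $X\colon D\to\mathbb{R}^4$ of $\iota(D)$ with coordinate $z=u+iv$, so that $X_z=\tfrac12(X_u-iX_v)$. Minimality plus conformality make $X$ harmonic, hence $X_z$ is holomorphic (as a $\mathbb{C}^4$-valued map) with $X_z\cdot X_z=0$, while the Lagrangian condition reads $\omega_0(X_z,X_{\bar z})=0$. On a Lagrangian surface the cubic form $C(U,V,W):=\omega_0(\sigma(U,V),W)$ (with $\sigma$ the second fundamental form) is totally symmetric, because $J$ is parallel and interchanges the tangent and normal bundles. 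I would then set
\[
Q:=\omega_0(X_z,X_{zz})=C(X_z,X_z,X_z),
\]
the identification using $\omega_0(X_z,X_{\bar z})=0$ to discard the tangential part of $X_{zz}$. Since $X_{z\bar z}=\tfrac14\Delta X=0$, one checks $\partial_{\bar z}Q=0$, so $Q$ is holomorphic, and $Q\,dz^3$ transforms as a genuine cubic differential on $D$ (the defect term vanishes by $\omega_0(X_z,X_z)=0$). Here $Q$ extends holomorphically across the isolated interior branch points.

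Next I would analyze $Q$ on $\partial D$. Along $\partial\Sigma=\iota(\partial D)$, writing $x$ for the position on $\mathbb{S}^3$ and $T$ for the unit tangent, the Legendrian and unit-sphere conditions make $\{x,Jx,T,JT\}$ an orthonormal frame of $\mathbb{R}^4$ with $T\Sigma=\mathrm{span}\{\mu,T\}$, $\mu=\sin\theta\,x+\cos\theta\,Jx$, and normal bundle $\mathrm{span}\{J\mu,JT\}$. Differentiating $\mu$ along the boundary and using $\nabla J=0$ gives $\tfrac{d\mu}{ds}=-\theta'\,J\mu+\sin\theta\,T+\cos\theta\,JT$, whose normal component is $\sigma(\mu,T)=-\theta'\,J\mu+\cos\theta\,JT$. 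The crucial input is that $\theta$ is constant, so $\theta'=0$ and hence $\langle\sigma(\mu,T),J\mu\rangle=0$, while $\langle\sigma(\mu,T),JT\rangle=\cos\theta$. Combining this with $\mathrm{tr}\,\sigma=0$ (minimality) and the full symmetry of $C$, a short bookkeeping in the boundary-adapted conformal frame (where $X_z=\tfrac{\lambda}{2}\,\bar z\,(\mu-iT)$ on $|z|=1$) yields $z^3Q=4(\lambda/2)^3\cos\theta$ on $\partial D$, a real quantity.

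Thus $g:=z^3Q$ is holomorphic on $D$ with real boundary values, so $\mathrm{Im}\,g$ is harmonic and vanishes on $\partial D$; the maximum principle forces $\mathrm{Im}\,g\equiv0$, hence $g\equiv\mathrm{const}$, and holomorphicity at $z=0$ forces $Q\equiv0$. Reading the boundary identity back then gives $\cos\theta=0$, i.e. $\theta=\tfrac\pi2$, while $Q\equiv0$ together with minimality and the symmetry of $C$ forces $\sigma\equiv0$, so $\iota(D)$ is a piece of an affine Lagrangian plane; the resulting free-boundary condition $\mu=x$ pins this plane through the origin, identifying $\iota(D)$ as the equatorial disk. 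The step I expect to be the main obstacle is the boundary computation: correctly identifying the normal part of the derivative of $\mu$, tracking the $dz^3$-weight so as to see that constant contact angle makes $z^3Q$ real, and ensuring regularity of the boundary so that $\lambda\neq0$ there and $Q$ extends across interior branch points.
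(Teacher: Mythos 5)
Your proof is correct, and at its core it is the same argument as the paper's: both rest on a Hopf-type holomorphic cubic differential together with the observation that the Legendrian capillary condition plus minimality make $z^3$ times its coefficient real on $\partial D$, whence that function is constant by the maximum principle and identically zero because of the factor $z^3$, forcing the second fundamental form to vanish. Within that common frame you make two genuinely different choices, both worth recording. First, you prove holomorphy of $Q=\omega_0(X_z,X_{zz})$ directly from harmonicity of the conformal parametrization, $X_{z\bar z}=0$; the paper instead invokes the Castro--Urbano criterion (Proposition \ref{holo criterion}), proved via the Codazzi equation, that the cubic form is holomorphic exactly for Lagrangian surfaces with conformal Maslov form. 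Your route is more elementary and has the bonus that $Q$, being defined without any reference to the second fundamental form, is manifestly holomorphic across interior branch points; the paper's route covers the strictly larger conformal-Maslov-form class, which it needs for Conjecture \ref{conj1}. Second, your boundary computation re-derives the paper's Proposition \ref{J} (constant contact angle along a Legendrian boundary is equivalent to $\langle\sigma(\mu,T),J\mu\rangle=0$, i.e.\ the boundary is a curvature line), but you push it to the explicit identity $z^3Q=\pm\tfrac{\lambda^3}{2}\cos\theta$ on $\partial D$, whereas the paper only records that the imaginary part, proportional to $A_{\theta\theta\theta}-3A_{rr\theta}$, vanishes, using $A_{rr\theta}=0$ (capillarity) and $A_{\theta\theta\theta}=-A_{rr\theta}$ (minimality). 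Your sharper identity buys a conclusion the paper leaves implicit: once $Q\equiv 0$, it forces $\cos\theta=0$, so the boundary is in fact free and the totally geodesic plane must pass through the origin, which is precisely what ``equatorial'' requires. Finally, the regularity issue you flag as the main obstacle is harmless: reality of $z^3Q$ on $\partial D$ survives possible isolated boundary branch points by continuity, and $\lambda\neq 0$ at a single boundary point already yields $\cos\theta=0$.
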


This result is a higher codimensional generalization of the above mentioned results of Nitsche and Ros-Souam.
We remark that in the free boundary case, Fraser-Schoen \cite{FS} obtained a much stronger result. They proved in \cite{FS} that any proper branched minimal immersion of a 2-disk into $\mathbb{B}^n$ with free boundary must be an equatorial plane disk. 
We remark that the theorem is also true for Lagrangian surfaces in a complex space form with boundary on a geodesic sphere.

Our result is optimal in the following sense. There are annulus type minimal Lagrangian surfaces with Legendrian capillary boundary, see Section 2. However, we conjecture that there is no 
annulus type minimal Lagrangian surface with Legendrian free boundary. If we remove the minimality, then there are interesting disk type Lagrangian surfaces with Legendrian capillary boundary, as a part of the Whitney sphere, which is 
the umbilical surface in the Lagrangian counterpart.

\section{Lagrangian surfaces with Legendrian capillary boundary}

\subsection{Convex symplectic manifolds}
Let $M$ be a symplectic manifold with a symplectic form $\omega$ and $S$ a closed hypersurface in $M$ enclosing $X$. One can view $X$ as a manifold with boundary $S$, with a unit outward normal vector field $N$.  A vector field $V$ on $M$ is called a Liouville vector field  if it satisfies
\begin{equation}\label{eq1}L_V \omega =\omega.\end{equation}
Here $L_V$ is the Lie derivative. $S$ is called convex  (or $\omega$-convex) if there exists a Liouville vector field $V$ in a neighborhood of $S$ such that 
\begin{equation}\label{eq2} \langle V, N \rangle >0, \quad \hbox{ on } S.
\end{equation}
See \cite{EG}.
In this case, $X$ is a symplectic manifold with a convex boundary $S$. Using this Liouville vector field, there is a naturally contact structure on $S$ induced by $X$ with the contact form
$$ \alpha:=i_V\omega.$$
In fact, by using \eqref{eq1} and \eqref{eq2} one can check 
$$(d\alpha)^{n-1}\wedge \alpha =\omega^{n-1}\wedge i_V\omega \not =0.$$
We equip $M$ with an integrable complex structure $J$, which is compatible with the symplectic form $\omega$, then a Riemannian metric $g$ on $M$ is $g(X,Y):=\omega(JX,Y)$ for all $X,Y\in TM$. 

The corresponding Reeb vector field of contact form $\alpha$ is 
\begin{equation}\label{eq0.1}
\xi:=\frac 1 a JV,
\end{equation}
where $a:=\sqrt{g(V,V)}$.

We want to use this contact structure to introduce a class of Lagrangian submanifolds in $X$ with boundary on $S$. 
We first recall the definition of Lagrangian submanifolds  in symplectic manifold $M^{2n}$ and  Legendrian submanifolds in a contact manifold. 

\begin{defn}\quad
	\emph{An immersion $\iota:\Sigma\to M$ of $n$-dimensional submanifold $\Sigma$ is called} Lagrangian submanifold \emph{if one of the following conditions holds:}
	\begin{itemize}
		\item[(1)] \emph{$\iota^*\omega=0$.}
		\item[(2)] \emph{$J(T\Sigma)=N\Sigma$, where $T\Sigma$ and $N\Sigma$ are the tangent and normal bundle of $\Sigma$.}
	\end{itemize}	
\end{defn}

\begin{defn}\quad
	\emph{$\partial\Sigma \subset S$  is called a} Legendrian submanifold \emph{in $\partial M$, if
		\begin{align}\label{eq3}	\xi\perp T\partial\Sigma
		\end{align}
		holds on $\partial\Sigma$ everywhere.}
\end{defn}

Let us focus on the 
$M:=\mathbb{C}^n$ with the standard Euclidean metric $\langle \cdot, \cdot\rangle$, the standard
symplectic form $\omega_0$ and the standard complex structure $J_0$.  
Consider a closed hypersurface $S$ (which is also called a support) with a unit outward normal vector field $N$. On $\mathbb{C}^n$ there is a well-known Liouville vector field, the position vector field $V$.
We consider the support $S$ with condition \eqref{eq1}, which is a strictly star-shaped condition. Every strictly convex hypersurface enclosing the origin in its interior satisfies this condition. Since we are interested on the support hypersurface $S$ being the unit sphere, we further focus on the Lagrangian submanifold $\Sigma$ in  $\mathbb{C}^n$ with boundary   $\partial \Sigma$ lying on the unit sphere $\mathbb{S}^{2n-1}:=\partial \mathbb{B}^{2n}$. We require that the boundary $\partial\Sigma$ as a submanifold on $\mathbb{S}^{2n-1}$ is Legendrian
i.e., \eqref{eq3} holds. In this case, $V=N$ and $a=1$. Hence $\xi =JN$.

For such a Lagrangian submanifold with boundary on $\mathbb{S}^{2n-1}$,  we have a simple, but interesting observation.

\begin{prop}\quad \label{legendrian} Let $\Sigma$ be a Lagrangian submanifold in $\mathbb{C}^n$ with   boundary  $\partial \Sigma \subset \mathbb{S}^{2n-1}$ and $\mu$ the outward unit normal vector field of $\partial \Sigma \subset \Sigma$.
	The boundary $\partial \Sigma$ is a Legendrian submanifold on $\mathbb{S}^{2n-1}$ if and only if 
	\begin{align}\label{Legendrian}
	\mu\in \mbox{span}\{N, JN\},\quad\text{on}\quad \partial\Sigma.
	\end{align}
\end{prop}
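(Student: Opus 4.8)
The plan is to work pointwise at an arbitrary $x\in\partial\Sigma$ and exploit the fact that on the unit sphere the support data are completely explicit: the outward normal is the position vector $N=x$, so that $a=1$ and the Reeb field is $\xi=JN$. The Legendrian condition \eqref{eq3} then reads $JN\perp T\partial\Sigma$, and since $T\partial\Sigma\subset T\Sigma$ this is equivalent to asking that the $T\Sigma$-tangential part $(JN)^\top$ be orthogonal to $T\partial\Sigma$ inside $T\Sigma$. Because $T\Sigma=T\partial\Sigma\oplus\mathbb{R}\mu$ with $\mu$ the unit conormal, this is in turn equivalent to $(JN)^\top\in\mathbb{R}\mu$. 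So the whole statement reduces to computing $(JN)^\top$ and reading off when it is parallel to $\mu$.

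First I would decompose $N=N^\top+N^\perp$ relative to $T\Sigma\oplus N\Sigma$. Since $\partial\Sigma\subset\mathbb{S}^{2n-1}$, every $w\in T\partial\Sigma$ is tangent to the sphere and hence orthogonal to $x=N$; as $N^\perp\perp T\Sigma\ni w$ this forces $\langle N^\top,w\rangle=0$, so $N^\top$ lies in the orthocomplement of $T\partial\Sigma$ in $T\Sigma$, i.e. $N^\top=\langle N,\mu\rangle\,\mu$. The key structural input is the Lagrangian property in the form $J(T\Sigma)=N\Sigma$, equivalently $J(N\Sigma)=T\Sigma$ (using $J^2=-\mathrm{id}$). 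Applying $J$ to the decomposition gives $JN=JN^\top+JN^\perp$ with $JN^\top\in N\Sigma$ and $JN^\perp\in T\Sigma$; hence $(JN)^\top=JN^\perp$ exactly.

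Combining these, $\partial\Sigma$ is Legendrian if and only if $JN^\perp\in\mathbb{R}\mu$, and applying the isometry $J^{-1}=-J$ this is equivalent to $N^\perp\in\mathbb{R}J\mu$. Together with $N^\top=\langle N,\mu\rangle\mu$ this says precisely that $N=\langle N,\mu\rangle\mu+N^\perp\in\mathrm{span}\{\mu,J\mu\}$. The final step is the observation that $\mathrm{span}\{\mu,J\mu\}$ and $\mathrm{span}\{N,JN\}$ are $J$-invariant real $2$-planes, i.e. complex lines, and any nonzero vector of a complex line spans it over $\mathbb{C}$; thus $N\in\mathrm{span}\{\mu,J\mu\}$ forces the two complex lines to coincide, which is exactly the condition $\mu\in\mathrm{span}\{N,JN\}$ of \eqref{Legendrian}, and conversely. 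I expect the only real subtlety to be the clean bookkeeping of how $J$ swaps the tangential and normal parts under the Lagrangian hypothesis (so that $(JN)^\top=JN^\perp$); once that identity is in hand, the complex-line argument closes the equivalence in both directions simultaneously.
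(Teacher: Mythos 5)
Your proof is correct and follows essentially the same route as the paper: both arguments rest on the orthogonal splitting of $\mathbb{C}^n$ along $\partial\Sigma$ into $\mbox{span}\{\mu,J\mu\}\oplus T\partial\Sigma\oplus JT\partial\Sigma$ induced by the Lagrangian condition (your tangential/normal bookkeeping for $N$, with $(JN)^\top=JN^\perp$, is just this splitting applied to the vector $N$), together with the observation that $\mbox{span}\{\mu,J\mu\}$ and $\mbox{span}\{N,JN\}$ are complex lines so that membership of one generator in the other span forces the lines to coincide. The only difference is presentational; in fact you spell out the final complex-line equivalence, which the paper asserts with ``which is equivalent to \eqref{Legendrian}'' without proof.
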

\begin{proof}\quad
	Since $\Sigma$ is  Lagrangian, along $\partial \Sigma$ we have the orthogonal decomposition,
	\begin{align}\label{eq4}
	\mathbb{C}^n=T\Sigma\oplus N\Sigma=T\Sigma\oplus JT\Sigma=\mbox{span}\{\mu,J\mu\}\oplus T\partial\Sigma\oplus JT\partial\Sigma.
	\end{align}
	
	If $\partial \Sigma$ is Legendrian, i.e.,  $\xi \perp T\partial \Sigma$, then  we have that $N\perp JT\partial\Sigma$. Combining with $N\perp  T\partial\Sigma$, it follows that 
	\begin{align*}
	N=-J\xi\in\mbox{span}\{\mu,J\mu\},
	\end{align*}
	which is equivalent to \eqref{Legendrian}. 
	
	If \eqref{Legendrian} holds, then $\xi \in \hbox {span}\, \{\mu, J\mu\}$. Due to the decomposition \eqref{eq4}, it follows that
	$\xi \perp T\partial \Sigma$, namely $\partial \Sigma$ is Legendrian.
\end{proof}

\begin{rem}\quad
	In particular, when $\mu=N$, \eqref{Legendrian} holds. Hence  from Proposition \ref{legendrian} a Lagrangian submanifold with free boundary $\partial \Sigma$ on $\mathbb{S}^{2n-1}$ has a Legendrian boundary.
\end{rem}

From now on we consider Lagrangian submanifolds with a Legendrian boundary on the support $\mathbb{S}^{2n-1}$. From \eqref{Legendrian}, there exists an angle function $\theta\in [0,\pi)$ such that
\begin{equation}\label{eq5}
\mu =\sin \theta N+ \cos \theta JN.
\end{equation}
$\theta$ is called the {\it contact angle}.
When $\theta\equiv \frac \pi 2$ on $\partial \Sigma$,  then $\Sigma$ is a free boundary Lagrangian submanifold. Therefore we introduce naturally following definition for capillary Lagrangian submanifolds.

\begin{defn}\quad \emph{Let $\Sigma$ be a Lagrangian submanifold in $\mathbb{B}^{2n}$ with a Legendrian boundary on $\partial\mathbb{B}^{2n}$. 
		We call $\Sigma$ is a}  Langrangian submanifold with a Legendrian capillary boundary \emph{(or simply a} capillary Lagrangian submanifold\emph{), if its contact angle is a constant.	
		In particular, if $\theta\equiv\frac \pi 2$,  $\Sigma$ is called a}  Langrangian submanifold with a free boundary \emph{(or simply a} free boundary Lagrangian submanifold.\emph{)} 
\end{defn}

\subsection{Interesting examples}
In this subsection we provide several interesting examples for $n=2$, which motivate us to consider such Lagrangian surfaces with boundary problem.

Consider $\mathbb{C}^2$ as the Euclidean space $\mathbb{R}^4$ with standard complex
structure $$J(x_1,x_2,y_1,y_2 ) = (-y_1,-y_2,x_1,x_2),$$ that is, a complex coordinate $x =
(x_1+ iy_1,x_2 + iy_2 )\in\mathbb{C}^2$ is identified with the real vector $x:=(x_1,x_2,y_1,y_2)\in\mathbb{R}^4.$ The standard metric and symplectic form is
\begin{align*}
g_0:=\sum_{i=1}^2 (dx_i^2+dy_i^2),\quad \omega_0:=\sum_{i=1}^{2}dx_i\wedge dy_i.
\end{align*}

\begin{exa}[\textbf{Plane  disk}]\quad
	The most trivial and simple Lagrangian surface with free boundary in the unit ball is the Lagrangian plane as
	\begin{align*}
	\Sigma:=\{(x_1,x_2,0,0)\}\subset\mathbb{C}^2.
	\end{align*}
	
\end{exa}
It is easy to see from above that the intersection of plane $\Sigma$ with the unit ball is a Lagrangian minimal surface with a free boundary on $\mathbb{S}^3.$	

\begin{exa}\quad
	The \textbf{Lagrangian catenoid} can be written as
	\begin{align*}
	\mathbb{LC}^n:=\{(x,y)\in\mathbb{R}^{2n}: |x|y=|y|x, \mbox{Im}(|x|+i|y|)^n=1, |y|<|x|\tan\frac{\pi}{n}\}.
	\end{align*}
	In particular, for $n=2$, it can be written as a holomorphic surface like
	\begin{align*}
	\mathbb{LC}^2&:=\{(z,\frac{1}{z})\in\mathbb{C}^2: z\neq 0\}.
	\end{align*}
	Or in parametrization $(t,\theta)$ form
	\begin{align*}
	\mathbb{LC}^2:=\{(t\cos\theta,t\sin\theta,\frac{1}{t}\cos\theta,\frac{1}{t}\sin\theta)\in\mathbb{C}^2| t>0, 0\leq \theta<2\pi\}.
	\end{align*}\end{exa}

It is well-known that $\mathbb{LC}^n$ is a topological  $\mathbb{R}\times\mathbb{S}^{n-1}$, its induced metric is conformally flat, but not cylindrical. In \cite{CU1999}, 
the authors proved that if a Lagrangian minimal submanifold in $\mathbb{C}^n$ is foliated by round $(n-1)$-spheres, it is congruent to a Lagrangian catenoid.
Hence it is natural Lagrangian counterpart of catenoid.
\begin{prop}\quad For any $r_0>\sqrt 2$, 
	$(\frac 1 {r_0} \mathbb{LC}^2) \cap \mathbb{B}^4$ is a Lagrangian, minimal annulus with a Legendrian  capillary boundary.
	Moreover, the two components of its boundary are great circles.\end{prop}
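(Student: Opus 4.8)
The plan is to work throughout with the explicit parametrization $F(t,\theta)=(t\cos\theta,\,t\sin\theta,\,\tfrac1t\cos\theta,\,\tfrac1t\sin\theta)$ of $\mathbb{LC}^2$ and the rescaled embedding $\tilde F=\tfrac1{r_0}F$. First I would settle the topology and the threshold $r_0>\sqrt2$: since $|F|^2=t^2+t^{-2}$, the condition $|\tilde F|^2\le 1$ reads $t^2+t^{-2}\le r_0^2$, i.e. $u^2-r_0^2u+1\le0$ with $u=t^2$. This quadratic has two positive roots $u_\pm$ (with $u_-u_+=1$) exactly when $r_0^2\ge2$, and for $r_0>\sqrt2$ they are distinct; hence $t$ ranges over a genuine compact interval $[t_-,t_+]$ while $\theta\in[0,2\pi)$, so $\tilde F$ parametrizes a cylinder, i.e. an annulus. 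The two boundary circles sit at $t=t_\pm$, where $|\tilde F|=1$, so $\partial\bigl((\tfrac1{r_0}\mathbb{LC}^2)\cap\mathbb{B}^4\bigr)\subset\mathbb{S}^3$.

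Next I would record the Lagrangian and minimal properties. A direct computation gives $\omega_0(F_t,F_\theta)=0$, so $\iota^*\omega_0=0$ and the surface is Lagrangian; scaling by $1/r_0$ multiplies $\omega_0$ by a constant and preserves this. Minimality is the defining feature of the Lagrangian catenoid (it is special Lagrangian, cf. \cite{CU1999}), and homotheties of $\mathbb{C}^2$ preserve minimality, so $\tfrac1{r_0}\mathbb{LC}^2$ is minimal Lagrangian. For the great-circle claim I would note that at fixed $t$ the boundary is $\tfrac1{r_0}\bigl(\cos\theta\,v_1+\sin\theta\,v_2\bigr)$ with $v_1=(t,0,\tfrac1t,0)$, $v_2=(0,t,0,\tfrac1t)$; since $v_1\perp v_2$ and $|v_1|=|v_2|=\sqrt{t^2+t^{-2}}=r_0$ on the boundary, this circle lies in the $2$-plane $\mathrm{span}\{v_1,v_2\}$ through the origin and has unit radius, hence is a great circle of $\mathbb{S}^3$.

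To verify the Legendrian condition I would appeal to Proposition \ref{legendrian}, reducing it to $\mu\in\mathrm{span}\{N,JN\}$ with $N=\tilde F$ on the boundary. Since $\langle F_t,F_\theta\rangle=0$, the outward conormal is $\mu=\pm F_t/|F_t|$, so it suffices to show $F_t\in\mathrm{span}\{F,JF\}$; solving $F_t=aF+bJF$ yields $a=\tfrac{t^4-1}{t(1+t^4)}$, $b=\tfrac{-2t}{1+t^4}$, confirming the Legendrian boundary. Finally, for the contact angle I would compute $\cos\theta=\langle\mu,JN\rangle=\mp\tfrac{2t}{r_0\sqrt{t^4+1}}$ and $\sin\theta=\langle\mu,N\rangle$, using $\langle F_t,JF\rangle=-2/t$ and $\langle F_t,F\rangle=t-t^{-3}$; both are manifestly independent of the angular variable, so the angle is constant along each boundary circle.

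The main obstacle I anticipate is the \emph{global} constancy of the contact angle across the two components. Evaluating at $t_\pm$ and using $t_-=1/t_+$ shows that $\sin\theta$ agrees on both circles while $\cos\theta$ changes sign, so the two angles are supplementary. This is exactly the behaviour forced by the anti-holomorphic symmetry $S\colon(x_1,x_2,y_1,y_2)\mapsto(y_1,y_2,x_1,x_2)$, which exchanges the two circles and satisfies $SJ=-JS$; the delicate point is to fix the orientation of the Reeb field $\xi=JN$ (equivalently the co-orientation in $\mu=\sin\theta\,N+\cos\theta\,JN$) consistently on the two components, so that the common value of the contact angle—determined by $|\cos\theta|=\tfrac{2t_+}{r_0\sqrt{t_+^4+1}}$, depending only on $r_0$—is genuinely constant.
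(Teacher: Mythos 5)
Your proposal is correct, and in its skeleton it follows the same route as the paper's proof: the explicit parametrization, the determination of the parameter range (the paper intersects $\mathbb{LC}^2$ with the ball of radius $r_0$ and obtains $t\in[T_-,T_+]$ with $T_\pm^2=\frac{r_0^2}{2}\pm\sqrt{\frac{r_0^4}{4}-1}$, which is your quadratic in $u=t^2$ up to rescaling), the Lagrangian property (you check $\omega_0(F_t,F_\theta)=0$, the paper exhibits the normal frame $JX_t/|X_t|$, $JX_\theta/|X_\theta|$ --- equivalent), the Legendrian property (you invoke Proposition \ref{legendrian} and solve $F_t=aF+bJF$, the paper checks $\langle X_\theta,JX\rangle=0$ directly), and the identification of the outward conormal $\mu$ with $\pm X_t/|X_t|$ at the two ends. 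You also verify the great-circle claim explicitly, which the paper only asserts, and you outsource minimality to the known fact about the Lagrangian catenoid, exactly as the paper does.

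The one substantive difference is the capillary step, and there you are more careful than the paper. The paper computes only $\langle\mu,\overline N\rangle=\sqrt{r_0^4-4}/r_0^2$ on both circles and concludes; but the contact angle $\theta\in[0,\pi)$ in \eqref{eq5} is pinned down by the pair $(\sin\theta,\cos\theta)=(\langle\mu,N\rangle,\langle\mu,JN\rangle)$, so equal sines alone do not give equal angles. Your extra computation $\langle\mu,JN\rangle=\mp\frac{2t}{r_0\sqrt{t^4+1}}=\mp\frac{2}{r_0^2}$ at $t=T_\pm$ shows the cosine changes sign between the two components, so with the paper's fixed conventions the two circles carry supplementary angles $\theta_0$ and $\pi-\theta_0$, and your anti-holomorphic symmetry $S$ with $SJ=-JS$ explains structurally why this must happen. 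Thus the ``delicate point'' you flag is genuine and is precisely a gap in the paper's own argument. The only correction to your final paragraph: this cannot be repaired by re-orienting the Reeb field, since $\xi=JN$, the sphere normal $N$ and the outward conormal $\mu$ are all canonically determined by the paper's definitions, leaving no orientation freedom; what is needed is the weaker (and presumably intended) reading of capillarity, namely that the contact angle is constant on each boundary component, equivalently constant as the unordered pair $\{\theta_0,\pi-\theta_0\}$ determined by $|\cos\theta_0|=2/r_0^2$.
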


\begin{proof}\quad
	Consider the ball with radius $r_0>\sqrt 2$ in $\mathbb{R}^4$ 
	\begin{align*}
	\mathbb{	B}_{r_0}^4:=\{x\in\mathbb{R}^4: |x|=r_0\}.
	\end{align*} 
	Define  $\Sigma:=  \mathbb{LC}^2\cap \mathbb{B}^4_{r_0}$, and set $$T^2_\pm := \frac{r_0^2}{2}\pm \sqrt {\frac {r_0^4}4-1}.$$
	Then 
	$$\Sigma=\{X=(t\cos\theta,t\sin\theta,\frac{1}{t}\cos\theta,\frac{1}{t}\sin\theta)\in\mathbb{C}^2| t\in [T_-, T_+], 0\leq \theta<2\pi\}.$$
	The tangent space of $T\Sigma$ is spanned by
	\begin{align*}
	\begin{cases}
	X_t=(\cos\theta,\sin\theta,-\frac{\cos\theta}{t^2}, -\frac{\sin\theta}{t^2}),\\
	X_{\theta}=(-t\sin\theta,t\cos\theta,-\frac{\sin\theta}{t}, \frac{\cos\theta}{t}).
	\end{cases}
	\end{align*}
	One can check that  the normal space of $N\Sigma$ is spanned by
	\begin{align*}
	\begin{cases}
	\nu_1 = J X_t /|X_t|=  \frac 1 {\sqrt{ 1+t^4}}(\cos \theta, \sin\theta, t^2  \cos \theta,  t^2  \sin \theta ), \\
	\nu_2 = J X_\theta /|X_\theta|= \frac 1 {\sqrt{ 1+t^4}} (\sin \theta ,-\cos\theta , -t^2 \sin\theta, t^2 \cos\theta).
	\end{cases}	\end{align*}
	Hence this surface is Lagrangian.  One can also see that both two boundary components 
	$$\partial_\pm \Sigma =\{(t\cos\theta,t\sin\theta,\frac{1}{t}\cos\theta,\frac{1}{t}\sin\theta)\in\mathbb{C}^2| t=T_\pm, 0\leq \theta<2\pi\}.
	$$
	And its boundary is Legendrian, because of
	\[
	\langle {X_\theta}, JX \rangle=0,\quad \text{on}\quad \partial _\pm\Sigma.
	\]
	One  can check that the boundary is  capillary. In fact, the conormal $\mu$ is perpendicular to the space spanned by $\nu_1,\nu_2$ and $X_{\theta}$, and hence we have that $\mu$ is parallel to $X_t$ on $\partial_\pm \Sigma$.
	Therefore
	\begin{align*}
	\mu=\mp\frac{(-T_\pm^2\cos\theta,-T_\pm^2 \sin\theta,\cos\theta,\sin\theta)}{\sqrt{1+T_\pm^4}}.
	\end{align*}
	Then
	\begin{align*}
	\langle \mu,\overline{N}\rangle&:=\langle \mu,\frac{X}{|X|}\rangle
	\\&=\frac{\mp 1}{r_0\sqrt{1+T_\pm^4}}\big\langle (-T_\pm^2\cos\theta,-T_\pm^2 \sin\theta,\cos\theta, \sin\theta),(T_\pm \cos\theta,T_\pm\sin\theta,\\&\quad \frac{1}{T_\pm}\cos\theta,\frac{1}{T_\pm}\sin\theta)\big\rangle
	\\&=\frac{\sqrt{r_0^4-4}}{r_0^2}\in (0,1).
	\end{align*}
	
\end{proof}

\begin{exa}\quad
	The \textbf{Whitney sphere} is the immersion $f_{r,c}$ given by
	\begin{align}&f_{r,c}:\mathbb{S}^2\to \mathbb{R}^{4}\\&
	(x_1,x_2,x_{3})\mapsto \frac{r}{1+x_{3}^2}(x_1,x_2,x_1x_{3}, x_2x_{3})+c,
	\end{align}
	where $r$ is a positive constant and
	$c$ is a point in $\mathbb{C}^2$. We will refer to the quantities $r$ and $c$ as the radius and the center of the Whitney sphere.\end{exa}

Up to dilations and
translations, all Whitney spheres can be identified with $f_{1,0}$.
These Whitney spheres  play the role of umbilical hyperspheres in the Euclidean space $\mathbb{R}^{n+1}$ of the Lagrangian setting.

From the discussion in \cite{RU}, we know $JH$ is a conformal vector field for the Whitney sphere, where $H$ is the mean curvature vector field of the corresponding Lagrangian submanifold. Equivalently, $i_{JH}\omega$ is the conformal Maslov form, where $i_{JH}\omega(\cdot):=\omega(\cdot,H)$ is called Maslov form.

\begin{prop}\quad For any $r>1$, $f_{r,0} (\mathbb{S}^2)\cap \overline{\mathbb{B}}^4$ is a Lagrangian surface with a Legendrian capillary boundary, which has a  conformal Maslov form.
	Moreover, it consists of two components, each of them is a topological disk.
\end{prop}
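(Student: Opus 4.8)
The plan is to exploit the rotational symmetry of the Whitney sphere by reparametrizing $f_{r,0}$ through the height $s:=x_3$ together with an angle $\phi$ rotating $(x_1,x_2)$, and to observe that all the relevant geometric quantities depend on $s$ alone. Writing $\rho(s):=r\sqrt{1-s^2}/(1+s^2)$, one has $f_{r,0}(s,\phi)=\rho(s)\,(\cos\phi,\sin\phi,s\cos\phi,s\sin\phi)$. I would first compute the distance to the origin: a direct calculation gives $|f_{r,0}|^2=\rho^2(1+s^2)=r^2(1-s^2)/(1+s^2)$, a function of $s$ only. Hence $f_{r,0}(\mathbb{S}^2)\cap\overline{\mathbb{B}}^4=\{|x_3|\ge s_0\}$ with $s_0:=\sqrt{(r^2-1)/(r^2+1)}\in(0,1)$ for $r>1$, which is precisely the pair of spherical caps about the poles $x_3=\pm1$ (both poles mapping to the origin). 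Each cap is a topological disk and the two boundary circles lie at $x_3=\pm s_0$; this settles the topological assertion.

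Next I would address the Lagrangian property. Here one can either invoke the standard fact, recorded above the statement following \cite{RU}, that every Whitney sphere is Lagrangian, or verify it directly in these coordinates by checking $\omega_0(\partial_s f,\partial_\phi f)=0$, where the cross terms cancel in pairs and leave zero. For the boundary conditions I use that $|f|=1$ there, so $N=f$ and $\xi=JN=Jf$. The tangent to $\partial\Sigma$ is $\partial_\phi f$, and a short computation yields $\langle Jf,\partial_\phi f\rangle=0$; by Proposition \ref{legendrian} (equivalently by \eqref{eq3}) the boundary is therefore Legendrian.

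To extract the contact angle I would first note $\langle\partial_s f,\partial_\phi f\rangle=0$, so the outward conormal $\mu$ is $\pm\partial_s f/|\partial_s f|$, whence $\langle\mu,N\rangle=\pm\langle\partial_s f,f\rangle/|\partial_s f|$ with $\langle\partial_s f,f\rangle=\tfrac12\frac{d}{ds}|f|^2=-2r^2 s/(1+s^2)^2$. Since every term depends only on $s$, and $s$ is constant ($=\pm s_0$) on each boundary circle, the contact angle $\theta$ of \eqref{eq5} is constant on each component; checking the sign of the outward direction on each cap (the interior runs toward the pole, so $\mu=-\partial_s f/|\partial_s f|$ on the north cap and $\mu=+\partial_s f/|\partial_s f|$ on the south cap) shows the two values coincide, so $\theta$ is a single constant and $\Sigma$ is capillary. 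Finally, the conformal Maslov form is inherited: $JH$ is a conformal vector field on the entire Whitney sphere by \cite{RU}, a pointwise property that restricts to the two caps.

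The only genuinely delicate point is the capillarity step: one must keep careful track of the outward orientation of $\mu$ on each cap and confirm that the two components share the same angle. Everything else reduces to the single observation that the geometry of $f_{r,0}$, and in particular $|f_{r,0}|^2$, depends only on the height coordinate $s=x_3$.
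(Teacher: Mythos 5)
Your argument is essentially the direct verification that the paper explicitly waves at (``One can check the Proposition directly as above'') but does not carry out; the paper's actual proof is different in spirit. It transfers everything from the Lagrangian catenoid of Example 2.7: the Whitney sphere is the inversion of the holomorphic curve $(z,\tfrac1z)$, inversion preserves intersection angles and maps the Reeb vector field of $\partial\mathbb{B}_{r_0}$ to a multiple of the Reeb vector field of $\partial\mathbb{B}_{1/r_0}$, so by \eqref{eq3} a Legendrian capillary boundary is carried to a Legendrian capillary boundary, and a rescaling finishes. Your rotationally symmetric parametrization buys explicit data the inversion argument does not give -- the cap heights $s=\pm s_0$ with $s_0=\sqrt{(r^2-1)/(r^2+1)}$, and in principle the value of the angle -- and it makes the two-disk topological claim transparent, whereas the paper leaves that to be read off from the inversion picture; the paper's route is shorter and explains conceptually why the catenoid and the Whitney caps are the same phenomenon. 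The Lagrangian, Legendrian and conformal-Maslov-form parts of your argument are correct (I checked $\omega_0(\partial_s f,\partial_\phi f)=0$, $\langle Jf,\partial_\phi f\rangle=0$, and the appeal to \cite{RU} is the same one the paper makes).

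However, the step you yourself flag as the delicate one is not closed, and it hides a genuine sign issue. Equation \eqref{eq5} determines $\theta\in[0,\pi)$ through \emph{two} components: $\sin\theta=\langle\mu,N\rangle$ and $\cos\theta=\langle\mu,JN\rangle$; you only verify that $\langle\mu,N\rangle$ agrees on the two caps. Computing the other component in your parametrization gives $\langle\partial_s f,Jf\rangle=\rho^2>0$ at both boundary circles (an even function of $s$), while with your (correct) conormal choices $\mu=-\partial_s f/|\partial_s f|$ on the north cap and $\mu=+\partial_s f/|\partial_s f|$ on the south cap. Hence $\cos\theta$ has opposite signs on the two components: the contact angles are supplementary, $\theta$ and $\pi-\theta$, not equal. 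This reflects the symmetry $s\mapsto-s$, i.e.\ $(x_1,x_2,y_1,y_2)\mapsto(x_1,x_2,-y_1,-y_2)$, which is an isometry anti-commuting with $J$. So each cap is capillary and the \emph{unsigned} intersection angle with $\mathbb{S}^3$ is the same on both circles, but the signed angle of \eqref{eq5} is not a single constant, and your sentence ``the two values coincide'' is justified only for $\sin\theta$. To be fair, the identical phenomenon occurs, unremarked, in the paper's own catenoid computation (there too only $\langle\mu,\overline{N}\rangle$ is evaluated, and one finds $\langle\mu,J\overline{N}\rangle=\mp 2T_\pm/(r_0\sqrt{1+T_\pm^4})$ with opposite signs on the two circles), so your proof meets the paper's de facto standard; but a rigorous write-up should either compute $\langle\mu,JN\rangle$ and read ``capillary'' component-wise, or work with the unsigned angle throughout.
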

\begin{proof}\quad It is clear that for any $r>1$, $f_{r,0} (\mathbb{S}^2)\cap \overline{\mathbb{B}}^4$ is nonempty. One can check the Proposition directly as above. Here we use the above example to show the Proposition.
	It is well-known that the Whitney sphere can be viewed as an inversion of the holomorphic curve
	$$(z, \frac 1 z),$$
	which, in turn, can be viewed as the Lagrangian catenoid. Let us consider the intersection of the above holomorphic curve
	with the ball $\mathbb{B}_{r_0}$ with $r_0>\sqrt 2$. After taking an inversion with respect to the unit sphere, we get a Whitney sphere intersecting with the ball $\mathbb{B}_{\frac 1 {r_0}}$.
	Since the inversion preserves the intersecting angle, we obtain this Whitney sphere intersecting with $\partial \mathbb{B}_{\frac 1{r_0}}$ at a constant angle. Moreover the Reeb vector field of $\partial \mathbb{B}_{r_0}$ is mapped to a  multiple of the
	Reeb vector field of  $\partial \mathbb{B}_{\frac 1{r_0}}$. In view of the definition of Legendrian submanifolds, \eqref{eq3}, it is clear that this inversion maps a Legendrian curve on $\partial \mathbb{B}_{{r_0}}$ into a Legendrian curve in $\partial \mathbb{B}_{\frac 1{r_0}}$. 
	The statement follows after a rescaling.
\end{proof}

\subsection{A Lagrangian Joachimsthal property}

Let $\Sigma \subset \mathbb{R}^3$ be a surface in $\mathbb{R}^3$. Since the second fundamental form  $h$ is symmetric, at any fixed point $p$ on $\Sigma$ one can find a suitable local coordinate around this point such that $h$ is diagonal at
$p$, i.e., $h_{12}( p)=0$. If this property is true for a curve $\gamma$, to be more precise, for any point at the curve, there exists a local coordinate such that $\frac \partial {\partial x_1}$ is the tangential vector field  of this  curve and 
$h_{12}=0$ along the curve,
then this curve $\gamma$ is called a {\it curvature line}.  Note that for hypersurfaces one can also introduce the notion of curvature lines.
Following is  a classical result of Joachimsthal: 

\

{\it Let $M_i\subset \mathbb{R}^3$  $(i=1,2)$ be  two regular and oriented surfaces
	such that $M_1$ intersects  $M_2$ along a regular $\gamma$ with a constant angle.
	Then  $\gamma$ is a principal curvature
	line of $M_1$  if and only if it is a curvature line of $M_2$.}

\

As a corollary of this result, one can see that for either a free boundary surface or capillary surface in a ball, its boundary is a curvature line.
In fact, it is not difficult to check that if $\Sigma \subset \mathbb{R}^3$ intersects with $\mathbb{S}^2$ along its boundary $\partial \Sigma$, then the angle is constant if and only if $\partial \Sigma$ is a curvature line.

\

Now  we introduce analogously  the notion of curvature line for Lagrangian surfaces as higher codimensional submanifolds as follows. Let $\Sigma$ be a Lagrangian surface in $
\mathbb{R}^4$ and  $h$ be  its second fundamental form. We view it as  a 3-tensor by using
$$ A(e_i, e_j, e_k)=\langle h(e_i,e_j), Je_k \rangle,$$
for any orthonormal frame $\{e_i\}_{i=1}^2$ on $\Sigma$. It is well-known that $A$ is a symmetric 3-tensor (or see below). For such a 3-tensor, at any fixed point $p\in \Sigma$ one can find a local coordinate around $p$ such that $A(e_1,e_2,e_2)(p)=0$, where $e_1$ is the tangent vector field and $e_2$ is conormal vector field to the curve. We call a curve $\gamma \subset \Sigma$ is a line of curvature (or curvature line), if this property holds on the whole curve.

Now we can show 

\begin{prop}\label{J}\quad  Let $\Sigma\subset  \mathbb{R}^4$ be a Lagrangian surface with Legendrian boundary $\partial \Sigma$ on unit sphere $\mathbb{S}^3$ and contact angle $\theta$. Then $\partial \Sigma$ is a curvature line if and only if $\theta$ is a constant, i.e.,
	$\Sigma$ is a Lagrangian surface with Legendrian capillary boundary.
\end{prop}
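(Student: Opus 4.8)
The plan is to compute the single tensor component $A(e_1,e_2,e_2)$ explicitly along $\partial\Sigma$ in an adapted frame and to show that it equals $-\,d\theta/ds$, so that it vanishes exactly when $\theta$ is constant. First I would fix, along $\partial\Sigma$ parametrized by arclength $s$, the orthonormal frame $e_1=T$ (the unit tangent to the boundary curve) and $e_2=\mu$ (the outward conormal); by the Lagrangian condition $\{JT,J\mu\}$ then spans $N\Sigma$. In this frame the line-of-curvature condition from the definition reads $A(e_1,e_2,e_2)=\langle h(T,\mu),J\mu\rangle=0$. Since $J\mu\in N\Sigma$, the Gauss formula lets me discard the tangential part of the ambient derivative and write $\langle h(T,\mu),J\mu\rangle=\langle \overline{\nabla}_T\mu,J\mu\rangle$, where $\overline{\nabla}$ is the flat connection of $\mathbb{R}^4$.

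Next I would differentiate the capillary relation \eqref{eq5}, $\mu=\sin\theta\,N+\cos\theta\,JN$, along $T$. On $\mathbb{S}^3$ the outward normal is the position vector, $N=X$, so $\overline{\nabla}_T N=T$, and since $J$ is parallel, $\overline{\nabla}_T(JN)=JT$. This yields $\overline{\nabla}_T\mu=\theta'(\cos\theta\,N-\sin\theta\,JN)+\sin\theta\,T+\cos\theta\,JT$, where $\theta'=d\theta/ds$. Using $J^2=-\mathrm{Id}$ I would also record $J\mu=-\cos\theta\,N+\sin\theta\,JN$, and then pair the two expressions.

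The computation then reduces to tracking which inner products survive. The $\theta'$-term contributes $-\theta'$, because $\cos\theta\,N-\sin\theta\,JN=-J\mu$ is a unit vector. Every remaining term vanishes: $\langle T,N\rangle=0$ and $\langle JT,JN\rangle=\langle T,N\rangle=0$ since $T$ is tangent to the sphere, while $\langle T,JN\rangle=0$ and $\langle JT,N\rangle=-\langle T,JN\rangle=0$ precisely because $\partial\Sigma$ is Legendrian (this is exactly where the hypothesis $T\perp\xi=JN$ enters, via the skew-symmetry of $J$). Hence $A(e_1,e_2,e_2)=\langle\overline{\nabla}_T\mu,J\mu\rangle=-\,\theta'$, and the stated equivalence follows immediately.

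The argument is essentially a one-line computation once the adapted frame is in place, so I do not expect a serious analytic obstacle; the only delicate point is the bookkeeping of the cross terms, and in particular the recognition that every geometric contribution other than $\theta'$ is annihilated by the Legendrian constraint $\langle T,JN\rangle=0$. I would double-check the symmetry of $A$ (used implicitly to identify $A(e_1,e_2,e_2)$ as the relevant component) together with the sign conventions for the outward conormal and the contact angle, since these fix the sign in $A(T,\mu,\mu)=-\,d\theta/ds$ but leave the equivalence unaffected.
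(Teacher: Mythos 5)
Your proposal is correct and follows essentially the same route as the paper: differentiate $\mu=\sin\theta\,N+\cos\theta\,JN$ along $T$ using $D_TN=T$, pair with $J\mu$, and use the Legendrian condition $\langle T,JN\rangle=0$ to kill the cross terms, arriving at $A(T,\mu,\mu)=-D_T\theta$. Your write-up merely makes explicit two steps the paper leaves implicit (passing from $h(T,\mu)$ to $\overline{\nabla}_T\mu$ via the Gauss formula, and the term-by-term vanishing), so there is nothing substantive to add or correct.
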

\begin{proof}\quad
	Let $\mu$ be the unit outward conormal  vector field of $\partial\Sigma\subset \Sigma$ and $T$ be the tangential vector field of the curve $\partial \Sigma$.
	By Proposition \ref{legendrian} we have 
	\begin{equation*}\mu =\sin \theta N+ \cos \theta JN,
	\end{equation*}
	where $\theta$ is the contact angle. Next we compute
	\begin{eqnarray*}
		A (T,\mu, \mu) =\langle h(T,\mu),J \mu \rangle.
	\end{eqnarray*}
	
	Since $\mathbb{S}^3$ is umbilic, denote $D$ as the gradient in $\mathbb{R}^4$, we have $D_T N=T$. It follows that
	$D_T \mu =D_T\theta\cdot (\cos \theta N- \sin \theta JN)  + \sin\theta T +\cos \theta JT$. This formula, together with the Legendrian property $\langle T, JN \rangle =0$ implies
	\begin{eqnarray*}
		A (T, \mu, \mu) =-D_T\theta ,
	\end{eqnarray*}
	i.e., $A(T,\mu, \mu)$ vanishes if and only if  $\theta$ is constant, which finishes our proof.
\end{proof}

This Proposition can be viewed as a Lagrangian counterpart of the classical Joachimsthal Theorem.
\subsection{Lagrangian submanifolds with conformal Maslov form}
Consider $\iota :\Sigma\to\mathbb{C}^n$ be Lagrangian immersion of an $n$-dimensional submanifold $\Sigma$, we choose an orthonormal basis $\{e_i\}_{i=1}^n$ of $T\Sigma$ with respect to the induced metric $g$ on $\Sigma$, and $\{Je_i\}_{i=1}^n$ is the orthonormal basis for $N\Sigma$. The coefficient of the second fundamental form is given by $A_{ijk}:=\langle h(e_i,e_j), Je_k\rangle$ and $H:=\sum\limits_{i=1}^{n}h(e_i,e_i)$ is the mean curvature vector field. We still use the usual conventions of
summation for these indices. Note that we mark the normal frame with
an upper index and the tangential frame with a lower index. 

We introduce a  $(0,3)$-tensor $\breve{A}\in \Gamma \Sigma\big(T\Sigma\otimes T\Sigma\otimes T\Sigma\big)$  with its coefficient being
\begin{equation*}
\begin{split}
\breve{A}_{ijk}&:=\breve{A}(e_i,e_j,e_k)\\&=\langle h(e_i,e_j),Je_k\rangle-\frac{1}{n+2}\big[ \langle e_i,e_j\rangle \langle H,Je_k\rangle+\langle Je_i,H\rangle \langle Je_j,Je_k\rangle
\\&\quad +\langle Je_j,H\rangle \langle Je_i,Je_k\rangle\big]
\\&=A_{ijk}-\frac{1}{n+2}\big(H^kg_{ij}+H^ig_{jk}+H^jg_{ik}\big),
\end{split}
\end{equation*}
for any tangent vector field $e_i\in T\Sigma$.
This tensor will play a fundamental role in our following study. 

Notice that from Gauss-Weingarten equations, it is known that
\begin{align*}
\nabla_{e_i}e_j+h(e_i,e_j)=D _{e_i}e_j,\quad D_{e_i}Je_j= -W_j(e_i)+\nabla^{\perp}_{e_i}(Je_j),\end{align*}
where $\nabla$ and $D$ are the gradients in $\Sigma$ and $\mathbb{C}^n$ respectively,  and $\nabla^{\perp}$ denotes the connection in the normal bundle of $\Sigma$.  In particular, for Lagrangian submanifold, it holds 
\begin{align*}
\begin{cases}
J(\nabla_{e_i}e_j)=\nabla_{e_i}^{\perp }Je_i,
\\
Jh(e_i,e_j)=-W_j(e_i),
\end{cases}
\end{align*}
for any $e_i\in T\Sigma$. $W_j$ denotes the Weingarten map corresponding to the normal vector field $Je_j$.
Hence 
\begin{align*}
\langle h(e_i,e_j),Je_k\rangle &=\langle JW_j(e_i),Je_k\rangle=\langle W_j(e_i),e_k\rangle=\langle -D_{e_i}Je_j,e_k\rangle
\\&=\langle Je_j,h(e_i,e_k)\rangle,
\end{align*}
that is, $\langle h(\cdot,\cdot),J\cdot\rangle$ defines a totally symmetric trilinear form on $T\Sigma$. Then it follows that $\breve{A}$ is also a totally symmetric trilinear form on $T\Sigma$, and is trace free with any $2$-symbols. In particular, Castro-Urbano \cite{CU1993} and Ros-Urbano \cite{RU} classified all Lagrangian submanifolds with vanishing $\breve A$. They are a part of Lagrangian subspace or Whitney sphere. 

\begin{prop}\label{codazzi lag}\quad If $\iota:\Sigma\to \mathbb{C}^n$ is a Lagrangian immersion, then we have 
	\begin{equation} 
	{(n+2)\nabla^* \breve{A}=\mbox{\rm div}(JH) g-n\nabla (JH),}
	\end{equation}
	where $\nabla^*$ denotes the formal adjoint of the operator $\nabla$, i.e. $\nabla^* Q:=
	-\nabla_{e_i} Q (e_i,\ldots)$ for any $k$-form $Q$ along $\iota(\Sigma)$.
\end{prop}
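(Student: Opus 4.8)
The plan is to reduce the identity to a Codazzi-type symmetry of the covariant derivative of the cubic form $A_{ijk}=\langle h(e_i,e_j),Je_k\rangle$, together with two elementary trace relations. Throughout I would fix a point $p\in\Sigma$ and work in a local orthonormal frame $\{e_i\}$ that is geodesic at $p$, so that $g_{ij}=\delta_{ij}$ and the first derivatives of the frame vanish at $p$; I write covariant derivatives with a semicolon and sum over repeated indices.

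The crucial first step is to show that $A_{ijk;l}$ is \emph{totally symmetric} in all four indices. Differentiating $A_{ijk}=\langle h(e_i,e_j),Je_k\rangle$ and using that $J$, the metric, and the ambient connection $D$ are parallel in the flat space $\mathbb{C}^n$, together with the Gauss--Weingarten relations quoted above, one finds
\begin{equation*}
A_{ijk;l}=\langle(\nabla^{\perp}_{e_l}h)(e_i,e_j),Je_k\rangle+\langle h(e_i,e_j),Jh(e_l,e_k)\rangle .
\end{equation*}
The second term vanishes: for a Lagrangian submanifold $Jh(e_l,e_k)$ is tangent while $h(e_i,e_j)$ is normal, so their inner product is zero. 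Hence $A_{ijk;l}=\langle(\nabla^{\perp}_{e_l}h)(e_i,e_j),Je_k\rangle$. Since the ambient space is flat, the Codazzi equation makes $(\nabla^{\perp}_{e_l}h)(e_i,e_j)$ symmetric in $l,i,j$, so $A_{ijk;l}$ is symmetric in $\{i,j,l\}$; as the covariant derivative of the already totally symmetric tensor $A$ it is also symmetric in $\{i,j,k\}$. A short permutation argument (invariance under the subgroups permuting positions $1,2,3$ and positions $1,2,4$ forces invariance under the swap of positions $3,4$, hence full symmetry) then yields symmetry in all of $i,j,k,l$.

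Next I would record the trace identities. By definition $\sum_i A_{iik}=\langle H,Je_k\rangle=:H^k$ are the components of the mean curvature (equivalently of the tangent field $JH$); differentiating and invoking the total symmetry of $\nabla A$ just proved gives
\begin{equation*}
\sum_i A_{ijk;i}=\sum_i A_{iik;j}=H^{k}{}_{;j},
\qquad H^{k}{}_{;j}=H^{j}{}_{;k},
\end{equation*}
the last symmetry (closedness of the Maslov form) again being an immediate consequence of the total symmetry.

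Finally I would compute $\nabla^*\breve A$ directly. With the convention $(\nabla^*\breve A)(e_j,e_k)=-\sum_i\breve A_{ijk;i}$, I differentiate
\[
\breve A_{ijk}=A_{ijk}-\tfrac{1}{n+2}\big(H^kg_{ij}+H^ig_{jk}+H^jg_{ik}\big)
\]
in the direction $e_i$, contract, and use $g_{ij;l}=0$. The $A$--part contributes $\sum_iA_{ijk;i}=H^{k}{}_{;j}$, while the three correction terms contribute $\tfrac{1}{n+2}\big(H^{k}{}_{;j}+(\sum_iH^{i}{}_{;i})g_{jk}+H^{j}{}_{;k}\big)$. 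Collecting terms and using $H^{k}{}_{;j}=H^{j}{}_{;k}$ yields
\begin{equation*}
(n+2)(\nabla^*\breve A)(e_j,e_k)=\Big(\sum_iH^{i}{}_{;i}\Big)g_{jk}-n\,H^{k}{}_{;j},
\end{equation*}
which, under the identifications $\sum_iH^{i}{}_{;i}=\mathrm{div}(JH)$ and $H^{k}{}_{;j}=\nabla(JH)(e_j,e_k)$, is exactly the asserted identity. The main obstacle is the first step, the Codazzi computation establishing that $\nabla A$ is totally symmetric; once this symmetry and the two trace relations are in hand, the remainder is the bookkeeping just described. The only additional point requiring care is to match the sign conventions for $J$, for $\nabla^*$, and for the divergence, which is what fixes the precise signs on the right-hand side.
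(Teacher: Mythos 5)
Your proof is correct and takes essentially the same route as the paper: both arguments rest on the Codazzi symmetry of $\nabla A$ (which the paper simply quotes as $A_{ijk,l}=A_{ilk,j}$, while you derive it from the Gauss--Weingarten equations and the Lagrangian condition), and then perform the identical contraction of the definition of $\breve A$ together with the symmetry $H^{k}{}_{;j}=H^{j}{}_{;k}$. The only difference is that you spell out in detail what the paper cites as known; the substance of the argument is the same.
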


\begin{proof}\quad
	From the Codazzi equation, we know that $$A_{ijk,l}=A_{ilk,j}.$$
	By taking contraction, it follows that
	\begin{equation}\label{Codazzi eq}
	\nabla^{\perp}H=-\nabla^* A.
	\end{equation}
	A direct computation and combining with equation \eqref{Codazzi eq}, we deduce that
	\begin{equation*}
	\begin{split}
	(\nabla^* \breve{A})_{kij,k}&=-\nabla_{e_k}\breve{A}(e_k,e_i,e_j)=-
	\breve{A}_{kij,k}\\&=-\big[A_{kij,k}-\frac{1}{n+2}(H^{j}_{,k}g_{ki}+H^k_{,k}g_{ij}+H^i_{,k}g_{jk})\big],
	\end{split}
	\end{equation*}
	which implies
	\begin{equation*}
	\begin{split}
	\sum_{k}(\nabla^* \breve{A})_{kij,k}=\frac{1}{n+2}\sum_{k}H^k_{,k}g_{ij}-\frac{n}{n+2}H^{j}_{,i}.
	\end{split}
	\end{equation*}
	That is,
	\begin{equation*} (n+2)\nabla^* \breve{A}=\mbox{div}JH g -n\nabla (J H),
	\end{equation*}
	hence we complete the proof.
\end{proof}

We recall 
\begin{defn} \quad \emph{A Lagrangian immersion $\iota:\Sigma\to\mathbb{C}^n$ is called a} Lagrangian submanifold with conformal Maslov form \emph{if $\mbox{div}JH g -n\nabla (J H)=0$ or equivalently, $\nabla^*\breve{A}=0$  holds everywhere.} 
	
\end{defn}

It is easy to observe that if a Lagrangian immersion is a minimal submanifold or has parallel mean curvature vector field, then it is a Lagrangian submanifold with conformal Maslov form.
In \cite{CU1993}, Castro-Urbano introduced an interesting 3-form, which is  a Hopf type differential form and proved

\begin{prop}[\cite{CU1993}]\label{holo criterion}\quad Given
	$\iota:D \to \mathbb{C}^2$ is a Lagrangian immersion, then it is a Lagrangian with conformal Maslov form if and only if the associated cubic differential form $\Phi(z):=A(\partial_z,\partial_z,\partial_z)dz^3$ is holomorphic, where $A(\partial_z,\partial_z,\partial_z):=\langle h(\partial_z,\partial_z),J\partial_z\rangle$, and $\langle\cdot,\cdot\rangle, h,J$ are extended $\mathbb{C}$-linearly to the complexified bundles.
\end{prop}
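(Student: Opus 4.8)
The plan is to pass to a local isothermal coordinate $z=x+iy$ on $\Sigma$, in which the induced metric reads $g=e^{2\phi}\,|dz|^2$, and to extend $h$, $J$, $\langle\cdot,\cdot\rangle$ and the symmetric tensors $A,\breve A$ complex-linearly to $T\Sigma\otimes\mathbb{C}$. Since $\Phi$ is a cubic differential, its holomorphicity is the local condition $\partial_{\bar z}\big(A(\partial_z,\partial_z,\partial_z)\big)=0$ in any holomorphic chart, and this is coordinate-independent because $A(\partial_z,\partial_z,\partial_z)\,dz^3$ is tensorial. I would therefore aim to identify $\partial_{\bar z}A(\partial_z,\partial_z,\partial_z)$, up to a nonvanishing conformal factor, with a single component of $\nabla^*\breve A$; combined with Proposition \ref{codazzi lag} and the definition of the conformal Maslov form, this yields the equivalence.

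The first key step is to exploit that $\breve A$ is totally symmetric and trace-free. In the complex frame $\{\partial_z,\partial_{\bar z}\}$ the only nonzero metric components are $g_{z\bar z}=\tfrac12 e^{2\phi}$, so the trace-free condition $g^{jk}\breve A_{ijk}=0$ forces every mixed component to vanish: $\breve A_{zz\bar z}=\breve A_{z\bar z\bar z}=0$. Hence the only surviving components are $\breve A_{zzz}$ and its conjugate $\breve A_{\bar z\bar z\bar z}$. Moreover, because all correction terms in the definition of $\breve A$ carry a factor $g_{zz}=0$, one gets $\breve A_{zzz}=A_{zzz}=A(\partial_z,\partial_z,\partial_z)$, i.e.\ $\breve A_{zzz}$ is exactly the coefficient of $\Phi$.

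The second step is the connection computation. In isothermal coordinates the Levi-Civita connection has only two nonzero Christoffel symbols, $\Gamma^z_{zz}=2\phi_z$ and $\Gamma^{\bar z}_{\bar z\bar z}=2\phi_{\bar z}$. Writing $(\nabla^*\breve A)_{zz}=-g^{il}\nabla_l\breve A_{izz}=-g^{z\bar z}\big(\nabla_{\bar z}\breve A_{zzz}+\nabla_z\breve A_{\bar z zz}\big)$, one checks from the vanishing of the mixed components together with the special form of the Christoffel symbols that $\nabla_z\breve A_{\bar z zz}=0$ and $\nabla_{\bar z}\breve A_{zzz}=\partial_{\bar z}\breve A_{zzz}$, so that $(\nabla^*\breve A)_{zz}=-g^{z\bar z}\,\partial_{\bar z}\breve A_{zzz}$. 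An analogous, slightly shorter computation shows the mixed component $(\nabla^*\breve A)_{z\bar z}$ vanishes identically. Since $\nabla^*\breve A$ is symmetric and $(\nabla^*\breve A)_{\bar z\bar z}=\overline{(\nabla^*\breve A)_{zz}}$, the tensor $\nabla^*\breve A$ vanishes precisely when $\partial_{\bar z}A(\partial_z,\partial_z,\partial_z)=0$, i.e.\ precisely when $\Phi$ is holomorphic. By Proposition \ref{codazzi lag} the condition $\nabla^*\breve A=0$ is exactly the conformal Maslov form condition, which closes the argument.

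I would expect the main obstacle to be the careful bookkeeping in the connection computation, namely verifying that $\nabla_z\breve A_{\bar z zz}$ and $(\nabla^*\breve A)_{z\bar z}$ genuinely vanish despite the nonzero $\Gamma^z_{zz},\Gamma^{\bar z}_{\bar z\bar z}$, since these require using both the trace-free vanishing of the mixed components and the precise index structure of the covariant derivative. A secondary but routine point is to confirm that holomorphicity of the cubic differential is chart-independent, so that the purely local computation suffices; this follows from the tensorial transformation of $A(\partial_z,\partial_z,\partial_z)\,dz^3$ under holomorphic coordinate changes.
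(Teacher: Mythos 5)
Your proposal is correct, and every step checks out: in isothermal coordinates one indeed has $g_{zz}=g_{\bar z\bar z}=0$, so trace-freeness of $\breve A$ kills all mixed complexified components, the correction terms in $\breve A_{zzz}$ all carry a factor $g_{zz}=0$ so that $\breve A_{zzz}=A(\partial_z,\partial_z,\partial_z)$, and since the only nonvanishing Christoffel symbols are $\Gamma^z_{zz}$ and $\Gamma^{\bar z}_{\bar z\bar z}$, your identities $\nabla_z\breve A_{\bar zzz}=0$, $\nabla_{\bar z}\breve A_{zzz}=\partial_{\bar z}\breve A_{zzz}$, $(\nabla^*\breve A)_{z\bar z}=0$ and $(\nabla^*\breve A)_{\bar z\bar z}=\overline{(\nabla^*\breve A)_{zz}}$ all hold, giving $\nabla^*\breve A=0$ if and only if $\partial_{\bar z}A(\partial_z,\partial_z,\partial_z)=0$. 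However, your route through the computation differs from the paper's. The paper works with the full (not trace-free) tensor $A$ in real coordinates: it uses Proposition \ref{codazzi lag} with $n=2$ together with the Codazzi equation to rewrite the conformal Maslov condition as the two real identities $A_{111,1}=A_{222,2}$ and $A_{111,2}+A_{122,2}=0$, and then expands $\partial_{\bar z}\big[A(\partial_z,\partial_z,\partial_z)\big]$ into real components of $\nabla A$, namely $\tfrac{1}{16}\big(A_{111,1}-A_{222,2}-2iA_{112,1}-2iA_{122,2}\big)$, which vanishes precisely under those identities. You instead never leave the complex frame: you exploit the algebraic structure of $\breve A$ (total symmetry plus trace-freeness) to reduce $\nabla^*\breve A$ to the single component $-g^{z\bar z}\partial_{\bar z}\breve A_{zzz}$. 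Your argument is an instance of the general principle that a totally symmetric trace-free $k$-tensor on a Riemann surface is divergence-free if and only if the associated $k$-differential is holomorphic, so it is more structural, generalizes readily, and delivers both directions of the equivalence simultaneously, whereas the paper's written proof only carries out the forward implication explicitly (reversibility being implicit in the fact that real and imaginary parts of $\partial_{\bar z}A_{zzz}$ are exactly the two conformal Maslov identities). What the paper's more pedestrian computation buys is that the role of the Codazzi equation stays visible (it is what converts $A_{112,1}$ into $A_{111,2}$), and its real-component bookkeeping is the template reused in the proof of Theorem \ref{thm in Space forms}, where the same expansion is performed in polar coordinates to analyze the boundary terms.
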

\begin{proof}\quad For the completeness we give the proof here.
	Let $z:=x_1+ix_2$ be the standard complex coordinate on disk $D$ and also isothermal coordinate with respect to the standard Euclidean metric on $\iota(D)$, and 
	\begin{equation*}
	\begin{cases}
	\frac{\partial}{\partial z}:=\frac{1}{2}\big(\frac{\partial}{\partial x_1}-\sqrt{-1}\frac{\partial}{\partial x_2}\big),\\
	\frac{\partial}{\partial \overline{z}}:=\frac{1}{2}\big(\frac{\partial}{\partial x_1}+\sqrt{-1}\frac{\partial}{\partial x_2}\big).
	\end{cases}
	\end{equation*}
	Then the induced metric is
	\begin{align*}
	g=2|\iota_z|^2|dz|^2:=e^{2\lambda} |dz|^2,
	\end{align*}
	which gives
	\begin{align*}
	\nabla_{\partial_z}\partial_{\overline{z}}=0,\quad \nabla_{\partial_z}\partial_{{z}}=   2{\lambda_z} \partial_z,
	\end{align*}
	here $\nabla$ is the Levi-Civita connection of the first fundamental form $g$ on $\Sigma:=\iota(D)$.
	
	If $\Sigma$  is a Lagrangian with conformal Maslov form, i.e. $\nabla^*\breve{A}=0$, using Proposition \ref{codazzi lag} for $n=2$, it follows that
	\begin{align*}
	\mbox{div} JH g=2\nabla (JH).
	\end{align*}
	Combining with Codazzi equation, it is equivalent to
	\begin{align}\label{codazzi lag for n=2}
	\begin{cases}
	A_{111,1}=A_{222,2},\\
	A_{111,2}+A_{122,2}=0,
	\end{cases}
	\end{align}
	where $A_{***,*}$ are the coefficients of the covariant derivative of tensor $A$ with respect to $\partial_{x_1},\partial_{x_2}$, for instance $A_{111,1}:=(\nabla_{\partial_{x_1}} A)(\partial_{x_1},\partial_{x_1},\partial_{x_1})$, etc. Due to $\nabla_{\partial_z}\partial_{\overline{z}}=0$, we obtain 
	\begin{align*}
	\partial_{\overline{z}}\big[A(\partial_z,\partial_z,\partial_z)\big]&=
	(	\nabla_{\partial_{\overline{z}}} A)(\partial_z,\partial_z,\partial_z )+3A(\nabla_{\partial_{\overline{z}}}\partial_z,\partial_z,\partial_z)
	\\&=\frac{1}{16}\Big[(\nabla_{x_1+ix_2}A) (\partial_{x_1},\partial_{x_1},\partial_{x_1} )-3(\nabla_{x_1+ix_2}A)(\partial_{x_1},\partial_{x_2},\partial_{x_2})
	\\&\quad -3i(\nabla_{x_1+ix_2}A)(\partial_{x_1},\partial_{x_1},\partial_{x_2})+i(\nabla_{x_1+ix_2}A)(\partial_{x_2},\partial_{x_2},\partial_{x_2}) \Big]
	\\&=\frac{1}{16}\big(A_{111,1}-A_{222,2}-2iA_{112,1}-2iA_{122,2}\big)
	\\&=0,
	\end{align*}
	where we have used the Codazzi equation in the second equality and equation \eqref{codazzi lag for n=2} on the last equality above.

\end{proof}

\subsection{Proof of main theorem}
After the above preparation, we can now  give the proof of the main theorem. 
\begin{proof}[\textbf{Proof of Theorem \ref{thm in Space forms}}] 	\quad
	From  the above discussion, we know that a minimal Lagrangian submanifold is of Lagrangian with conformal Maslov form. From Proposition \ref{holo criterion} we know that the cubic differential form 
	\begin{align*}
	\Phi(z):=A(\partial_z,\partial_z,\partial_z)dz^3,
	\end{align*}
	is holomorphic in $D$. Now we choose the polar coordinate in disk with $z=re^{i\theta}$, then it follows that
	\begin{align*}
	\begin{cases}
	\frac{\partial}{\partial z}=\frac{e^{-i\theta}}{2}\big(\partial_r-\frac{i}{r}\partial_{\theta}\big),\\
	\frac{\partial}{\partial \overline{z}}=\frac{e^{i\theta}}{2}\big(\partial_r+\frac{i}{r}\partial_{\theta}\big).\end{cases}
	\end{align*}Hence, we have
	\begin{align*}
	8z^3A(\partial_z,\partial_z,\partial_z)={r^3}\big(A_{rrr}-\frac{3}{r^2}A_{r\theta\theta}-\frac{3i}{r}A_{rr\theta}+\frac{i}{r^3}A_{\theta\theta\theta}\big),
	\end{align*}
	on $\partial D$. Now we claim that its imaginary part  vanishes on $\partial D$. In fact, due to $\frac \partial {\partial r}$ is the normal vector on the boundary and $\frac \partial {\partial \theta}$ is the tangential vector,
	from Proposition \ref{J}, we have $A_{rr\theta}=0$ on the boundary $\partial D$. The minimality implies that 
	$A_{\theta\theta\theta}=-A_{rr\theta}=0$. 
	And hence the imaginary part  vanishes on the boundary. It follows that    $z^3A(\partial_z,\partial_z,\partial_z)\equiv c$. Since it vanishes at the origin, 
	$z^3A(\partial_z,\partial_z,\partial_z)\equiv 0$. 
	It follows, together with the minimality, that
	the second fundamental form vanishes over $D$. Then  $\Sigma=\iota(D)$ must be totally geodesic disk.
\end{proof}
For minimal Lagrangian surfaces with boundary  and Legendrian surfaces in $\mathbb{S}^5$ we refer to Chen-Yuan \cite{CY} and Fu \cite{F}.

\subsection{Problems} To end this note, we propose two conjectures.

\begin{conj}\label{conj1}\quad Any immersed Lagrangian disk in $\mathbb{R}^4$ with conformal Maslov form and Legendrian capillary boundary on $\mathbb{S}^3$, then it must be either totally geodesic or one of the Whitney spheres given in Example 2.9.
\end{conj}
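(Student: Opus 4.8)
The plan is to follow the strategy of the proof of Theorem~\ref{thm in Space forms}, replacing the minimality input by the conformal Maslov condition and allowing the extra Whitney solution to appear. Since $\iota$ has conformal Maslov form, Proposition~\ref{holo criterion} shows that the cubic differential $\Phi(z)=A(\partial_z,\partial_z,\partial_z)\,dz^3$ is holomorphic on $D$. The first observation is that in an isothermal coordinate $g(\partial_z,\partial_z)=0$, so every correction term in the definition of $\breve A$ carries a factor $g(\cdot,\cdot)$ evaluated on two $\partial_z$'s and hence vanishes; thus $\Phi(z)=\breve A(\partial_z,\partial_z,\partial_z)\,dz^3$. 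Because $\breve A$ is symmetric and trace-free, its only independent complex component on a surface is this $(3,0)$-part, so $\Phi\equiv 0$ is equivalent to $\breve A\equiv 0$. Once we know $\breve A\equiv 0$, the Castro--Urbano \cite{CU1993} and Ros--Urbano \cite{RU} classification of Lagrangian immersions with vanishing $\breve A$ (recalled just before Proposition~\ref{codazzi lag}) forces $\iota(D)$ to be an open piece of a Lagrangian plane (hence totally geodesic) or of a Whitney sphere. So the whole conjecture reduces to proving $\Phi\equiv 0$.

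To attack $\Phi\equiv 0$ I would repeat the boundary computation in the proof of Theorem~\ref{thm in Space forms}. Writing $z=re^{i\theta}$ one has on $\partial D$
\[
8z^3A(\partial_z,\partial_z,\partial_z)=A_{rrr}-3A_{r\theta\theta}-3i\,A_{rr\theta}+i\,A_{\theta\theta\theta},
\]
so $\operatorname{Im}\big(8z^3A(\partial_z,\partial_z,\partial_z)\big)=A_{\theta\theta\theta}-3A_{rr\theta}$. The Legendrian capillary boundary condition enters through Proposition~\ref{J}: it gives $A_{rr\theta}=A(\mu,\mu,T)=0$ on $\partial D$, where $T$ is the unit tangent and $\mu$ the conormal of $\partial\Sigma$. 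Using once more that $\breve A$ is trace-free together with $A(\mu,\mu,T)=0$, one finds $A(T,T,T)=H^T:=\langle H,JT\rangle=-\langle JH,T\rangle$, so the boundary imaginary part equals, up to a positive conformal factor, $A(T,T,T)=H^T$. If this term vanishes along $\partial\Sigma$, then the holomorphic function $z^3A(\partial_z,\partial_z,\partial_z)$ is real on $\partial D$ and vanishes at the origin, hence is identically zero, giving $\Phi\equiv 0$ as desired.

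The crux, and the step I expect to be the genuine obstacle, is therefore to prove $\langle JH,T\rangle\equiv 0$ on $\partial\Sigma$, i.e. that $JH$ is parallel to $\mu$ (equivalently $H\parallel J\mu$) along the boundary; this is automatic in the minimal case ($H\equiv 0$) of Theorem~\ref{thm in Space forms} but not in general. The natural tool is that the conformal Maslov condition makes the tangential field $X:=JH$ a closed conformal (concircular) vector field, $\nabla_iX_j=\tfrac12(\operatorname{div}X)g_{ij}$; on the simply connected disk $X=\nabla\phi$ with $\operatorname{Hess}\phi=\tfrac12(\Delta\phi)g$, and $\langle JH,T\rangle=\partial_s\phi|_{\partial\Sigma}$, so the desired identity is the boundary rigidity that the concircular potential $\phi$ is constant on $\partial\Sigma$. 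I would try to derive this by combining the Obata--Tashiro type rigidity of $\phi$ with the boundary geometry of Proposition~\ref{legendrian} (on $\partial\Sigma$ one has $x=\sin\theta\,\mu-\cos\theta\,J\mu$ and $Jx\perp T$), aiming at a first order ODE for $\phi$ along the closed boundary curve or at a maximum principle forcing $\phi$ constant there. Setting $g(z):=z^3A(\partial_z,\partial_z,\partial_z)$, which is holomorphic with a triple zero at the origin, the mean value property already yields $\oint_{\partial D}\operatorname{Im}g\,d\theta=2\pi\operatorname{Im}g(0)=0$, so the integral of the obstruction vanishes for free and the remaining task is to upgrade this to pointwise vanishing. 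As a consistency check, both expected solutions pass this test: $H\equiv 0$ for the plane, and $H^T=0$ for the Whitney spheres (forced by $\breve A=0$ together with capillarity), in agreement with the conjecture.
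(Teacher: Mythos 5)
Your proposal is not a proof; it is a (correct) reduction of the statement to exactly the obstruction that makes this a \emph{conjecture} in the paper rather than a theorem. The paper offers no proof of Conjecture \ref{conj1}: immediately after stating it, the authors explain that when the surface is not minimal the imaginary part of the holomorphic cubic form on $\partial D$ is $A_{\theta\theta\theta}-3A_{rr\theta}=A_{\theta\theta\theta}$ (capillarity killing $A_{rr\theta}$ via Proposition \ref{J}), and that ``it is not easy to show that $A_{\theta\theta\theta}=0$ on $\partial\Sigma$, without extra conditions''; they only note the conjecture holds under the additional hypothesis that the boundary is a round circle in $\mathbb{S}^3$. Your reduction agrees with this discussion and is sound as far as it goes: in isothermal coordinates $g(\partial_z,\partial_z)=0$, so $\Phi=\breve A(\partial_z,\partial_z,\partial_z)\,dz^3$, trace-freeness kills the mixed components, hence $\Phi\equiv 0$ is equivalent to $\breve A\equiv 0$, and the Castro--Urbano/Ros--Urbano classification then yields the plane or a Whitney sphere; likewise the trace identity $A(T,T,T)+A(\mu,\mu,T)=\langle H,JT\rangle$ combined with $A(\mu,\mu,T)=0$ correctly identifies the boundary obstruction as $\langle H,JT\rangle$.

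The genuine gap is the step you yourself label the crux: proving $\langle JH,T\rangle\equiv 0$ pointwise along $\partial\Sigma$. You propose tools (the concircular potential $\phi$ with $JH=\nabla\phi$ and $\operatorname{Hess}\phi=\tfrac12(\Delta\phi)g$, Obata--Tashiro type rigidity, a maximum principle, an ODE along the boundary) but none of these is carried out, and there is no evidence in the paper or elsewhere that any of them closes the argument; the only concrete identity you establish, $\oint_{\partial D}\operatorname{Im}\bigl(z^3A(\partial_z,\partial_z,\partial_z)\bigr)\,d\theta=0$, gives the vanishing of the \emph{average} of the obstruction, which is far weaker than the pointwise vanishing needed to conclude that the holomorphic function $z^3A(\partial_z,\partial_z,\partial_z)$ is real on $\partial D$. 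Since this is precisely where the authors stop as well, your proposal should be regarded as a faithful reformulation of the open problem, not a solution to it.
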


It was proved in \cite{CU1993} that any Lagrangian sphere with conformal Moslov form is a Whitney sphere. For the problem with boundary we have the following difficulty.
When the surface in Theorem 1.1 is not  minimal, we have only that the imaginary part  of the holomorphic form $\Phi$
is
$$A_{\theta\theta\theta}-3A_{rr\theta}=A_{\theta\theta\theta},$$
where $A_{rr\theta}=0$ on $\partial D$ is used by Proposition \ref{J} for Legendrian capillary boundary. It is not easy to show that $A_{\theta\theta\theta}=0$ on $\partial\Sigma$, without extra conditions.
If assume further that any the boundary is a circle on $\mathbb{S}^3$, then one can see that $A_{\theta\theta\theta}=0$ on $\partial\Sigma$. And hence, in this case the conjecture is true.

As mentioned in the Introduction we conjecture that there is no annulus type minimal Lagrangian surface with Legendrian free boundary.  In fact, we conjecture
\begin{conj}\label{conj2}\quad Any embedded annulus type minimal Lagrangian surface with Legendrian capillary boundary on $\mathbb{S}^3$  is one of  the examples given in Example 2.7.
\end{conj}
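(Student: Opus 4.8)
The plan is to run the holomorphic-cubic-form strategy from the proof of Theorem \ref{thm in Space forms}, but to replace the simply connected disk by a cylinder and to exploit \emph{both} boundary circles. First I would fix a conformal parametrisation of the annulus as a flat cylinder with coordinate $w=s+it$, $s\in[0,\ell]$, $t\in\mathbb{R}/2\pi\mathbb{Z}$, so that the induced metric is $e^{2\lambda}|dw|^2$ and the two boundary circles sit at $s=0$ and $s=\ell$. By Proposition \ref{holo criterion} the minimality makes the cubic form $\Phi=\phi(w)\,dw^3$, with $\phi:=A(\partial_w,\partial_w,\partial_w)$, holomorphic on the cylinder and $2\pi i$-periodic. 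On each boundary circle $\partial_t$ is tangent and $\partial_s$ is conormal; combining the Legendrian capillary condition through Proposition \ref{J} (which gives $A(T,\mu,\mu)=0$) with the trace-freeness coming from minimality (which then forces $A(T,T,T)=0$) shows that the imaginary part of $\phi$ vanishes identically on both lines $s=0$ and $s=\ell$. Note that this is exactly the point that fails in the non-minimal setting of Conjecture \ref{conj1}, so the hypothesis of minimality is essential here.

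The key new step is to reflect. Since $\phi$ is holomorphic up to the boundary and real on the two parallel lines $s=0,\ell$, the Schwarz reflection principle extends it holomorphically across each, and the composition of the two reflections is the translation $w\mapsto w+2\ell$. Hence $\phi$ continues to an entire function that is periodic with periods $2\ell$ and $2\pi i$; by Liouville it is a real constant $c$. If $c=0$ then $A\equiv 0$ and $\Sigma$ is a piece of a Lagrangian plane, whose intersection with $\mathbb{S}^3$ is a single circle, contradicting the annular topology; so $c\neq0$.

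It then remains to reconstruct the surface from the data ``minimal Lagrangian with $\phi\equiv c\neq0$''. Feeding $\phi\equiv c$ into the Gauss equation for minimal Lagrangian surfaces turns it into a Liouville--Tzitzeica type equation $\Delta_0\lambda=8c^2e^{-4\lambda}$ on the cylinder, where $\Delta_0=\partial_s^2+\partial_t^2$, subject to the capillary condition at $s=0,\ell$. If one knows that $\lambda=\lambda(s)$ depends only on $s$, this reduces to the ODE $\lambda''=8c^2e^{-4\lambda}$, which integrates to the catenoid profile; the fundamental (Bonnet--type) theorem for Lagrangian immersions then recovers the surface up to a rigid motion, and the rotational symmetry of the data makes it foliated by round circles, so by the Castro--Urbano classification \cite{CU1999} it is a piece of the Lagrangian catenoid. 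Matching the constant contact angle finally pins down the specific piece, recovering the constraint $r_0>\sqrt2$ and that the boundary circles are great circles, as in Example 2.7.

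The hard part will be precisely this rotational symmetry, i.e. proving that every solution of $\Delta_0\lambda=8c^2e^{-4\lambda}$ on the cylinder compatible with the periodicity and the capillary boundary conditions is a function of $s$ alone. This is exactly where the embeddedness hypothesis should enter, presumably through an Alexandrov--type moving-plane reflection; but the codimension-two setting obstructs the usual maximum-principle machinery, and one may instead have to classify the closed solutions of this integrable equation directly and show that the non-symmetric (``wavy catenoid'') solutions fail to close up into an embedded annulus. Controlling these non-symmetric solutions is the obstacle that keeps the statement at the level of a conjecture.
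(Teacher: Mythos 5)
The statement you are trying to prove is not a theorem of the paper: it is stated there as an open conjecture (a Lagrangian counterpart of the Fraser--Li conjecture), and the paper offers no proof of it. So there is no ``paper proof'' to match your argument against; the only question is whether your proposal closes the conjecture, and it does not --- as you yourself concede in the last paragraph. To give credit where it is due: the first half of your argument is correct and is a genuine, nontrivial observation that goes beyond what the paper records. Minimality makes the cubic differential $\Phi=\phi\,dw^3$ holomorphic on the conformal cylinder; the capillary condition via Proposition \ref{J} gives $A(T,\mu,\mu)=0$, minimality (trace-freeness of $A$) then gives $A(T,T,T)=0$, so $\mathrm{Im}\,\phi$ vanishes on both boundary lines $s=0,\ell$; Schwarz reflection across the two lines produces a doubly periodic entire function, hence by Liouville $\phi\equiv c\in\mathbb{R}$, and $c\neq 0$ since a totally geodesic Lagrangian plane meets $\mathbb{S}^3$ in a single circle and cannot bound an embedded annulus. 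Note that this reduction does not even use embeddedness, only the immersion being proper and smooth up to the boundary.

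The genuine gap is the second half, and it is not a technicality but the entire content of the conjecture. Knowing that the metric satisfies a Liouville--Tzitzeica type equation $\Delta_0\lambda=Ce^{-4\lambda}$ on the cylinder with Neumann-type capillary conditions does not classify the surface: such integrable equations on cylinders admit large families of non-rotationally-symmetric solutions (exactly as sinh-Gordon does in the CMC setting, where the Wente tori show that constancy of the Hopf-type differential alone never forces symmetry). Your proposal supplies no mechanism by which embeddedness eliminates these ``wavy'' solutions --- the moving-plane/Alexandrov machinery you mention is unavailable in codimension two, and no substitute is offered. Moreover, even granting $\lambda=\lambda(s)$, you would still need an equivariance argument (uniqueness in the fundamental theorem for Lagrangian immersions applied to the $t$-translations) to conclude that the image is foliated by round circles before Castro--Urbano \cite{CU1999} can be invoked; that step is sketched but not carried out. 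In short: you have correctly reduced the conjecture to a rigidity question for an integrable boundary value problem, which is a useful reformulation, but the reduction is where the known techniques stop, and the statement remains a conjecture after your argument just as it does in the paper.
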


This conjecture is  a Lagrangian  counterpart of the conjecture of Fraser-Li in \cite{FL}: {\it Any embedded minimal annulus with free boundary on $\mathbb{S}^2$ is the critical catenoid.}
In the class of Lagrangian catenoid discussed in Example 2.7 there is no example with Legendrian free boundary.

\textbf{Acknowledgements:}{ This work is partly supported by SPP 2026 of DFG ``Geometry at infinity''. A part of this work was carried out when the second named author visited UBC. He would like to thank Jingyi Chen and the department for their warm hospitality.}

\end{document}